\documentclass{llncs}

\usepackage{research}


\begin{document}

\title{A Markov Chain Analysis of a Pattern Matching Coin Game}

\titlerunning{Pattern Matching Card Games}

\author{James Brofos}

\authorrunning{James Brofos}
\tocauthor{James Brofos}

\institute{Dartmouth College, Hanover NH 03755, USA\\
\email{james.a.brofos.15@dartmouth.edu},\\ Website:
\href{http://www.cs.dartmouth.edu/~james/}{http://www.cs.dartmouth.edu/$\sim$james/}}

\maketitle

\begin{abstract}
	In late May of 2014 I received an email from a colleague introducing to me a non-transitive game developed by Walter Penney. This paper explores this probability game from the perspective of a coin tossing game, and further discusses some similarly interesting properties arising out of a Markov Chain analysis. In particular, we calculate the number of ``rounds'' that are expected to be played in each variation of the game by leveraging the fundamental matrix. Additionally, I derive a novel method for calculating the probabilistic advantage obtained by the player following Penney's strategy. I also produce an exhaustive proof that Penney's strategy is optimal for his namesake game.
	\keywords{probability, Markov Chains, non-transitive relations}
\end{abstract}

\section{The Penney Ante Game: Flips and Tricks}

The game developed by Walter Penney is not particularly well known and we think that half the fun is to be found in playing the game with your friends. Therefore we would like to take this first foray into the development of the game as an introduction and a lesson. The game is classically not played with suited cards (as is its treatment in \cite{nishiyama}), but instead with a fair coin. We discuss Penney's probabilistic game in the light of the coin.

Imagine that two players, \opp and \p, are to play Penney's game. Then we may assume, without loss of generality, that \opp has challenged \p, and for that reason it falls first to \play to elect a ``binary'' (though it is perhaps more natural to consider it either ``heads'' or ``tails'') sequence of length three. It's easy to see that there are eight such sequences, and let us suppose that \play has (lucklessly) chosen $\set{HHH}$ -- that is, the sequence of three heads one after the other.

In response, \opp also chooses a sequence. Let us pointedly suppose that \opp chooses the sequence $\set{THH}$. Then the game proceeds as follows:

\begin{quotation}
	A coin is flipped repeatedly until either the first or second player's sequence is observed. In that case, the player whose sequence is found first is declared the winner, and we say that this player has won the ``trick.'' As many tricks as one likes may be played, and the player who wins the most tricks wins overall. 
    
    For example, the following sequence may result from a fair coin,
    \begin{equation}
    	\set{HHTTTHTTHHHTHTH}
    \end{equation}
    Then we can by inspection confirm that the sequence chosen by \opp occurs at the eighth 3-sequence (immediately preceding \playp elected 3-sequence). Then we have that \opp has won this particular trick.
\end{quotation}

What makes this game particularly interesting is that there exists an \emph{optimal} strategy that may be taken by the second player (in this case \o) such that they may gain a probabilistic advantage over the first player (here, \p). We formalize this notion in the following definition.

\begin{definition}[Optimality]
	We say that a 3-sequence of binary random variables is a strategy. Furthermore, such a strategy $s_1$ is optimal for another strategy $s_2$ iff the probability that the sequence $s_1$ will occur before the sequence $s_2$ (and thereby winning the trick) is larger than $\frac{1}{2}$. We denote this probability using the notation,
    \begin{equation}
    	\Pr{s_1 \big| s_2}
    \end{equation}
\end{definition}

\subsection{Non-Transitive Games}

The word ``non-transitive'' may at first be rather intimidating, but as it happens almost everyone has been met with a non-transitive game at one point in their lives or another. We begin by defining a non-transitive relationship.

\begin{definition}[Non-Transitivity]
	A relationship between strategies $s_1$, $s_2$, and $s_3$ is said to be non-transitive if $s_1$ is optimal for $s_2$ and $s_2$ is optimal for $s_3$, but $s_1$ is not optimal for $s_3$.
\end{definition}

This is quite easy to find in everyday life, especially among young children! If one considers carefully, then it should be apparent that the classical game of rock-paper-scissors constitutes a non-transitive game. 

\begin{example}[Rock-Paper-Scissors]
	As we are playing a slightly different game in rock-paper-scissors, we shall for this section only consider a \emph{strategy} to be one of the game's namesake possibilities. Then we know that rock is optimal for scissors, and that scissors is optimal for paper. But if rock-paper-scissors were a transitive game, then rock would be optimal for paper. Indeed, this is not the case, and in fact just the opposite since paper is optimal for rock! Therefore, rock-paper-scissors is a non-transitive game.
\end{example}

Similarly, the Penney ante game is also a non-transitive game, which is what makes it interesting. Unlike rock-paper-scissors, however, where rock \emph{always} beats scissors, it is conceivable in the Penney ante game that a sub-optimal strategy may still win! Thus, optimality must be defined in terms of probabilities rather than in terms of absolute properties. 

\begin{proposition}
	The Penney ante game is a non-transitive game.
\end{proposition}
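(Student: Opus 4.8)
The plan is to prove the proposition by exhibiting three concrete strategies $s_1,s_2,s_3$ for which $s_1$ is optimal for $s_2$ and $s_2$ is optimal for $s_3$, yet $s_1$ is not optimal for $s_3$. By the definition of optimality this reduces to computing the three numbers $\Pr{s_1 \big| s_2}$, $\Pr{s_2 \big| s_3}$, $\Pr{s_1 \big| s_3}$ and checking that the first two exceed $\frac{1}{2}$ while the third does not. It is worth noting at the outset that the definition of non-transitivity only asks that $s_1$ \emph{fail} to be optimal for $s_3$, i.e. that $\Pr{s_1 \big| s_3}\le\frac{1}{2}$; so a tie on the third pairing already suffices, and in fact one checks that among the eight length-three sequences no triple admits a strict reversal on that third pairing, so a tie is the strongest witness available here.

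To evaluate a single quantity $\Pr{X \big| Y}$ for distinct length-three sequences $X$ and $Y$, I would model the stream of flips as a finite absorbing Markov chain. The transient states are the ``partial progress'' strings: the empty string $\varepsilon$ together with every proper prefix of $X$ or of $Y$. There are two absorbing states, ``$X$ occurs first'' and ``$Y$ occurs first''. From a transient state $w$ each flip ($H$ or $T$, probability $\frac{1}{2}$ each) appends a symbol $c$; writing $wc$ for the resulting string, the chain is absorbed if $wc$ equals $X$ or $Y$, and otherwise moves to the longest suffix of $wc$ that is still a prefix of $X$ or of $Y$. The essential point is that a broken partial match does not simply reset to $\varepsilon$---the process retains whatever shorter prefix survives, exactly as in the failure function of a string-matching automaton. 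If $a(w)$ denotes the probability that $X$ occurs before $Y$ starting from $w$, then $a$ equals $1$ and $0$ at the two absorbing states and at every transient state equals the average of its two one-flip successors; this small linear system has $\Pr{X \big| Y}=a(\varepsilon)$ as its solution at the start state, which is also the absorption probability one reads off the fundamental matrix, consistently with the machinery developed elsewhere in the paper.

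Carrying this out for $s_1=\set{THH}$, $s_2=\set{HHT}$, $s_3=\set{HTH}$ yields $\Pr{s_1 \big| s_2}=\frac{3}{4}$ and $\Pr{s_2 \big| s_3}=\frac{2}{3}$, both larger than $\frac{1}{2}$, so $s_1$ is optimal for $s_2$ and $s_2$ is optimal for $s_3$; whereas the race between $\set{THH}$ and $\set{HTH}$ comes out perfectly balanced, $\Pr{s_1 \big| s_3}=\frac{1}{2}$, so $s_1$ is \emph{not} optimal for $s_3$. By the definition of non-transitivity, the proposition follows. The choice of witness is not unique: $s_1=\set{TTH}$, $s_2=\set{THH}$, $s_3=\set{HHT}$ works identically, with the three probabilities $\frac{2}{3}$, $\frac{3}{4}$, $\frac{1}{2}$.

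I expect the only real labor to be the bookkeeping in setting up the three chains: for each one must list the viable prefixes and, for every transient state and every possible flip, determine the correct fallback state---a naive analysis that always returns to $\varepsilon$ whenever a symbol fails to extend the current match will give incorrect probabilities, and this is the step where an error is most likely to creep in. Once the chains are correctly written down the linear systems involve only a handful of unknowns and the remaining computation is routine. As an independent check I would recompute all three probabilities via Conway's leading-number (correlation) algorithm, which obtains them directly from the self- and cross-overlaps of the sequences and should agree with the Markov chain values.
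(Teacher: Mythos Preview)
Your proposal is correct and follows the same basic strategy as the paper: exhibit a concrete triple of strategies and verify the three head-to-head probabilities. Your alternative witness $(\set{TTH},\set{THH},\set{HHT})$ is in fact exactly the one the paper uses. The execution differs in two respects. First, you propose to compute all three probabilities directly via absorbing-chain first-step analysis (with Conway's correlation method as a cross-check), whereas the paper defers the first two computations to results proved later and dispatches the tie by a one-line symmetry argument: since $\set{TTH}$ and $\set{HHT}$ are obtained from one another by relabelling $H\leftrightarrow T$, neither can be optimal for the other. That observation eliminates precisely the fallback-state bookkeeping you identify as the most error-prone step. Second, your side remark that no triple of length-three sequences admits a \emph{strict} reversal on the third pairing is correct and somewhat sharper than what the paper records; the paper instead supplements the tie-based witness by pointing to the strict $4$-cycle $\set{HHT}\to\set{THH}\to\set{TTH}\to\set{HTT}\to\set{HHT}$ as a stronger manifestation of non-transitivity.
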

\begin{proof}
	As will follow from exhaustive proof later in the paper, we will have that $\set{HHT}$ has an optimal counterpart in $\set{THH}$. Furthermore, $\set{TTH}$ is optimal for $\set{THH}$. In a transitive game, this would suggest that $\set{TTH}$ is optimal for $\set{HHT}$, but indeed they are not, for each element in the strategy is simply the ``negation'' of the corresponding element of the other. Thus, if one strategy were sub-optimal, one could make it optimal by relabeling the sides of the coin.
    
    Interestingly, Penney's game exhibits what we suppose might be called a four-directional non-transitive relationship. Continuing where we left off, we will demonstrate later that the strategy $\set{HTT}$ is optimal for $\set{TTH}$. But crucially we have at last that $\set{HHT}$ is optimal for $\set{HTT}$. This is a result precisely opposite of what one would expect in a transitive game.
\end{proof}

\subsection{An Optimality Theorem}

\begin{lemma}
	\label{lemma:hhh_response}
	Assume that \play elects the sequence $\set{HHH}$ and that \opp in response chooses the sequence $\set{THH}$. Then \play will win iff $\set{HHH}$ comprises the first sequence to appear.
\end{lemma}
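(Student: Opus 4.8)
The plan is to prove the biconditional by handling its two directions separately, with the forward implication (``if $\play$ wins then $\set{HHH}$ appeared first'') carrying essentially all of the content. Throughout I would index the coin flips $x_1, x_2, x_3, \dots$ and say that a 3-sequence $\set{abc}$ ``occurs at position $k$'' when $x_k x_{k+1} x_{k+2} = abc$.

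The easy direction: if $\set{HHH}$ occurs at position $1$, then $\play$'s pattern has appeared at the earliest position any 3-sequence could possibly appear, so in particular $\opp$'s pattern $\set{THH}$ cannot have occurred strictly earlier; hence $\play$ wins the trick. For the converse I would argue the contrapositive. Suppose $\set{HHH}$ does \emph{not} occur at position $1$, and suppose toward a contradiction that $\play$ nevertheless wins — that is, $\set{HHH}$ does occur eventually, and its first occurrence precedes the first occurrence of $\set{THH}$. Let $k \ge 2$ be the starting index of this first occurrence of $\set{HHH}$, so that $x_{k-1}$ is a well-defined flip. If $x_{k-1} = H$, then $x_{k-1} x_k x_{k+1} = HHH$, contradicting the minimality of $k$; therefore $x_{k-1} = T$. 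But then $x_{k-1} x_k x_{k+1} = THH$, so $\opp$'s sequence $\set{THH}$ occurs at position $k-1 < k$, meaning $\opp$'s sequence appears strictly before $\play$'s — contradicting the assumption that $\play$ wins. Combining the two directions yields the claimed equivalence.

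The only real subtlety — and hence the step I would flag as the main obstacle — is the boundary case $k = 1$ and, relatedly, making fully explicit why the flip immediately preceding the \emph{first} run of three heads is forced to be a tail (it is precisely the minimality of $k$ together with $k \ge 2$ that rules out an $H$ there). Once that overlapping-window observation is pinned down, the rest is a one-line combinatorial argument, and I would not expect any computation to be required. This lemma will then be the workhorse for computing $\Pr{\set{THH} \,\big|\, \set{HHH}}$, since it reduces $\play$'s winning event to the single, easily-evaluated event that the first three flips are all heads.
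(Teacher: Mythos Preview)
Your proof is correct and is arguably cleaner than the paper's, but it proceeds by a genuinely different mechanism. The paper argues the forward direction by a \emph{forward} case analysis: assuming a $T$ lands somewhere in the first three flips, it enumerates the four possibilities for the two flips following that $T$ and observes that each either produces $\set{THH}$ outright or ends on another $T$, so the process ``resets'' and \play can never reach $\set{HHH}$. This is essentially an informal trace through the underlying Markov chain, and it foreshadows the chain-based arguments used later in the paper. Your argument instead looks \emph{backward} from the first occurrence of $\set{HHH}$: minimality of $k$ forces $x_{k-1}=T$, which immediately plants $\set{THH}$ at position $k-1$. What you gain is brevity and a single decisive observation (the flip preceding the first triple of heads must be a tail); what the paper's version gains is an explicit, state-by-state picture of why \play is locked out once any $T$ appears. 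One small tightening in your easy direction: you should note not only that nothing can occur strictly before position $1$, but also that $\set{THH}$ cannot occur \emph{at} position $1$ since $x_1=H$; this is implicit but worth saying.
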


\begin{proof}
    The direction $\Leftarrow$ is quite immediate as if the first sequence to appear is $\set{HHH}$ then \play has won by definition. 
    
    The $\Rightarrow$ direction requires closer examination. Suppose that the first sequence was not $\set{HHH}$. Then we have the implication that there must therefore be at least one $T$ within the first 3-sequence. Then we have the following four possibilities for the next 2-sequence following the $T$:
    \begin{description}
    	\item[$\set{HH}$:] In this case, the total 3-sequence is $\set{THH}$, and so \opp wins the trick.
        \item[$\set{HT}$:] Since the sequence ends on a $T$, nothing has changed since we began, and we may take the sequence to ``reset'' in a sense on the last occurring $T$. 
        \item[$\set{TH}$:] From here, the sequence can either transition to a $H$ (in which case \opp wins the trick), or to a $T$, in which case we can apply the logic in the point above.
        \item[$\set{TT}$:] In this case, nothing has changed, so we may take the sequence to reset on the last $T$ again.
    \end{description}
    Notice that in each of the four exhaustive conditions above, none allow for the possibility of \play winning the trick. Therefore, we conclude that \play cannot win if a $T$ has occurred within the first 3-sequence. Therefore, \play can only win when the first three sequence does not contain $T$, or in other words, if the first 3-sequence is $\set{HHH}$. \qed
    
\end{proof}

\begin{corollary}
	The probability that \play wins given that he selects the sequence $\set{HHH}$ and that \opp plays optimally is,
    \begin{equation}
    	\Pr{\set{HHH} \big| \set{THH}} = \arg{\frac{1}{2}}^3 = \frac{1}{8}
    \end{equation}
    Incidentally, the probability that \opp wins under these conditions is $1 - \frac{1}{8}=\frac{7}{8}$.
\end{corollary}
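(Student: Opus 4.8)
The plan is to invoke Lemma~\ref{lemma:hhh_response} directly and thereby reduce the entire computation to the distribution of the first three coin flips. By that lemma, with \play holding $\set{HHH}$ and \opp holding $\set{THH}$, the event ``\play wins the trick'' coincides \emph{exactly} with the event ``the first $3$-sequence observed is $\set{HHH}$.'' Since the coin is fair and successive flips are mutually independent, the probability that the first three flips spell $\set{HHH}$ is $\arg{\frac{1}{2}}^3 = \frac{1}{8}$, which is precisely the asserted value of $\Pr{\set{HHH}\big|\set{THH}}$. This is a one-line consequence once the lemma is in hand.

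For the ``incidental'' claim about \opp, I would first note that the trick terminates with probability one and that the two winning events are mutually exclusive, so that $\Pr{\set{THH}\big|\set{HHH}} = 1 - \Pr{\set{HHH}\big|\set{THH}}$. Disjointness is immediate, since being ``first to appear'' singles out a unique player. Almost-sure termination follows because in an infinite sequence of fair flips the block $\set{HHH}$ (for instance) recurs infinitely often --- a standard Borel--Cantelli or renewal argument on disjoint windows of length three --- so \emph{some} player's pattern is eventually observed. Combining, $\Pr{\set{THH}\big|\set{HHH}} = 1 - \frac{1}{8} = \frac{7}{8}$.

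The only real subtlety is this tacit appeal to almost-sure termination; without it, writing $1 - \frac{1}{8}$ for \oppp's probability would be unjustified, so it deserves an explicit sentence. Everything else is routine: independence of fair coin tosses handles the $\frac{1}{8}$, and the complement rule handles the $\frac{7}{8}$. I do not anticipate any genuine obstacle beyond making the termination remark precise.
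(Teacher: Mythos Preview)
Your argument is correct and matches the paper's own treatment: the corollary is stated without an explicit proof, relying implicitly on Lemma~\ref{lemma:hhh_response} together with the independence of fair coin flips, exactly as you do. Your added sentence on almost-sure termination is a welcome bit of rigor that the paper omits, but the underlying approach is the same.
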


\begin{theorem}[Penney's Optimal Strategy]
	\label{theorem:optimal}

	Denote by $\mathcal{B}$ a Bernoulli random variable with parameter $\frac{1}{2}$. Then the sequence $\set{\mathcal{B}_1, \mathcal{B}_2, \mathcal{B}_3}$ is a binary sequence of length three, which is analogous to the sequence chosen by \play in Penney's game. Then an optimal strategy for \opp to assume is the sequence,
    \begin{equation}
    	\set{\neg \mathcal{B}_2, \mathcal{B}_1, \mathcal{B}_2}
    \end{equation}
    Where $\neg$ is the logical negation such that,
    \begin{equation}
    	\neg : \set{0, 1} \to \set{1, 0}.
    \end{equation}

\end{theorem}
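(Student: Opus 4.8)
The plan is to prove optimality \emph{exhaustively}, in keeping with the paper's stated goal. First I would reduce the eight possible choices of \play to four representative cases via a symmetry argument; then, for each representative pair $(s_2, s_1)$, I would compute the win probability $\Pr{s_1 \big| s_2}$ --- most naturally, given the paper's theme, through a first--step analysis of a small absorbing Markov chain, though Conway's leading--number algorithm serves equally well --- and finally verify that each such probability strictly exceeds $\frac{1}{2}$, which is exactly what the paper's definition of optimality demands.

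The symmetry reduction comes first. Applying the negation map $\neg$ coordinatewise to both \playp sequence $s_2$ and the response $s_1$ assigned to \opp amounts to relabelling the faces of the coin; since the coin is fair this leaves the distribution of the flip process unchanged and hence preserves $\Pr{s_1 \big| s_2}$. Moreover, this relabelling commutes with the assignment $\set{\mathcal{B}_1, \mathcal{B}_2, \mathcal{B}_3} \mapsto \set{\neg \mathcal{B}_2, \mathcal{B}_1, \mathcal{B}_2}$ that defines the strategy. It therefore suffices to establish the inequality for the four sequences with $\mathcal{B}_1 = H$, namely $\set{HHH}$, $\set{HHT}$, $\set{HTH}$, and $\set{HTT}$; evaluating the strategy formula on these yields the responses $\set{THH}$, $\set{THH}$, $\set{HHT}$, and $\set{HHT}$ respectively.

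Next I would settle the four cases. The case $s_2 = \set{HHH}$ is already done: Lemma~\ref{lemma:hhh_response} and the corollary above give $\Pr{\set{THH} \big| \set{HHH}} = 1 - \frac{1}{8} = \frac{7}{8}$. For each of the remaining pairs --- $(\set{HHT}, \set{THH})$, $(\set{HTH}, \set{HHT})$, and $(\set{HTT}, \set{HHT})$ --- I would build a Markov chain whose transient states record the longest suffix of the current flip history that is a prefix of either target sequence, augmented by two absorbing states ``\opp wins'' and ``\play wins''; writing $p$ for the vector of absorption probabilities into ``\opp wins,'' I would then solve the linear system obtained from first--step analysis ($p = \frac{1}{2}p_H + \frac{1}{2}p_T$ at each transient state, with the evident boundary values). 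The three solutions work out to $\frac{3}{4}$, $\frac{2}{3}$, and $\frac{2}{3}$, each greater than $\frac{1}{2}$. Since every choice available to \play collapses by symmetry onto one of these four cases, the sequence prescribed for \opp beats \playp sequence with probability strictly greater than $\frac{1}{2}$ in all cases, which is precisely the assertion that the strategy is optimal.

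The principal obstacle is bookkeeping rather than ideas: correctly enumerating the transient states of each chain and, in particular, correctly resolving the overlaps between prefixes and suffixes of $s_1$ and $s_2$ (for instance the way $\set{HHT}$ and $\set{HTH}$ agree on the prefix $\set{H}$ but then split on $\set{HH}$ versus $\set{HT}$), so that no transition is mislabelled and no ``reset'' onto a shorter matching suffix is overlooked --- precisely the kind of careful suffix tracking already exhibited in the proof of Lemma~\ref{lemma:hhh_response}. Once each state diagram is correct the linear algebra is routine and the four inequalities are immediate.
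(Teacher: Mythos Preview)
Your proposal is correct and follows essentially the same architecture as the paper: reduce the eight cases to four by the $H\leftrightarrow T$ relabelling symmetry, dispatch $\set{HHH}$ via Lemma~\ref{lemma:hhh_response}, and handle the remaining three pairs by a Markov--chain computation yielding $\tfrac{3}{4}$, $\tfrac{2}{3}$, $\tfrac{2}{3}$.

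The only differences are in the mechanics of those three computations. For $\set{HHT}$ versus $\set{THH}$ the paper does not use a chain at all but sums the geometric series $\Pr{\set{HHT}}+\Pr{\set{HHHT}}+\cdots=\tfrac{1}{4}$ directly; for $\set{HTH}$ and $\set{HTT}$ the paper works with the full eight--state chain whose states are all $3$--sequences and reads the answers off the limiting matrix $\lim_{n\to\infty}\P^n$, rather than building your smaller suffix--tracking chain and solving a first--step system. Your construction is the more standard and more transparent one (it makes the overlaps you worry about explicit and keeps the linear systems tiny), while the paper's eight--state chain has the virtue of being uniform across cases and of tying in directly with the fundamental--matrix machinery developed later. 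Either route gives the same numbers, and your bookkeeping caveat about correctly handling suffix resets is exactly the point at which care is needed.
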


\begin{proof}[Part One of \Cref{theorem:optimal}]
    The proof is fortunately very straightforward, and it requires only the confirmation that \oppp sequence has consistently a higher probability of occurring first than does \p's. The eight possible strategies can be enumerated completely. 
    
    From \Cref{lemma:hhh_response} it is immediately clear that $\set{THH}$ is optimal for $\set{HHH}$. 
    
    If we assume that \play takes the strategy $\set{HHT}$, then \Cref{theorem:optimal} predicts that the strategy $\set{THH}$ is optimal. We can confirm this by calculating the probability that $\set{THH}$ will occur before the sequence $\set{HHT}$.
    \begin{eqnarray}
    	\Pr{\set{\scriptstyle{HHT}} \big| \set{\scriptstyle{THH}}} & = & \Pr{\set{\scriptstyle{HHT}}} + \Pr{\set{\scriptstyle{HHHT}}} + \Pr{\set{\scriptstyle{HHHHT}}} + \ldots\\
       	& = & \frac{1}{8} + \frac{1}{16} + \frac{1}{32} + \ldots = \sum_{k = 3}^\infty \arg{\frac{1}{2}}^k = \frac{\frac{1}{8}}{1 - \frac{1}{2}} = \frac{1}{4}
    \end{eqnarray}
    Therefore, we have via complementary events that,
    \begin{equation}
    	\Pr{\set{THH} \big| \set{HHT}} = 1 - \Pr{\set{HHT} \big| \set{THH}} = 1 - \frac{1}{4} = \frac{3}{4}
    \end{equation}
    This confirms that $\set{THH}$ is an optimal strategy for $\set{HHT}$. This concludes the cases for which we provide a proof for the moment. \qed

\end{proof}

Let us next examine the case where \play has elected the 3-sequence $\set{HTH}$ and \opp chooses $\set{HHT}$. Since both strategies depend on an initial result $H$, we may assume without loss of generality that the first coin flip in the sequence yields $H$. This proof is most easily understood via a Markov Chain argument, so we diverge here briefly to discuss Markov Chains and their relationship to Penney's game. 

\begin{lemma}
	Let \P~be the transition matrix of a Markov chain. Let the rows and columns of the transition matrix correspond to particular 3-sequences arising in the Penney ante game. Then the probability of transition from the 3-sequence $s_i$ to the sequence $s_j$ is given by the entry $\P_{i,j}$.
\end{lemma}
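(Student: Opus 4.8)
The plan is to construct the chain explicitly from the coin flips and then read off its transition matrix, so that the statement becomes a verification rather than an abstract assertion. First I would let $X_1, X_2, \dots$ be the sequence of independent fair coin flips generating the trick, and for each $n \ge 3$ define the \emph{window} random variable $S_n = \set{X_{n-2}, X_{n-1}, X_n}$, which records the three most recently observed flips. The state space of the chain is then the collection of length-three binary sequences indexing the rows and columns of $\P$ (all eight of them, or, if one prefers, only those reachable from the relevant starting configuration). The initial distribution places $S_3$ uniformly over the eight sequences, but since the lemma concerns only $\P$ this plays no role in what follows.

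Next I would verify the Markov property. The crucial structural fact is the recursion $S_{n+1} = \set{X_{n-1}, X_n, X_{n+1}}$: the first two coordinates of $S_{n+1}$ are the last two coordinates of $S_n$, so $S_{n+1}$ is a deterministic function of $S_n$ together with the single new flip $X_{n+1}$. Because $X_{n+1}$ is independent of $X_1, \dots, X_n$, and hence of the entire history $S_3, \dots, S_n$, conditioning on that full history yields the same conditional law for $S_{n+1}$ as conditioning on $S_n$ alone; this is exactly the Markov property, and it also shows the transition probabilities do not depend on $n$, so a single matrix $\P$ suffices.

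Then I would compute the entries directly. Writing $s_i = \set{a,b,c}$, the window can move only to a sequence whose first two symbols are $b$ and $c$, so $\P_{i,j} = 0$ unless $s_j = \set{b,c,d}$ for some $d \in \set{0,1}$, in which case $\P_{i,j} = \Pr{X_{n+1} = d} = \frac{1}{2}$. This is precisely the claimed identification of $\P_{i,j}$ with the transition probability from $s_i$ to $s_j$. I would then record the one modification needed for the rest of the paper: each of the two chosen 3-sequences is declared absorbing (given a self-loop of probability $1$), which is legitimate because the trick ends exactly when one of those patterns is first realized.

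The only point requiring genuine care — and the closest thing to an obstacle — is that last remark: one must be sure that stopping at the \emph{first} appearance of a target pattern is faithfully captured by absorbing states, i.e.\ that the window process cannot ``jump over'' an occurrence. This holds because consecutive windows $S_n$ and $S_{n+1}$ share two coordinates, so the window slides by exactly one position each step and therefore visits every length-three block of $\set{X_1, X_2, \dots}$ in turn; hence the first time $S_n$ equals a given pattern is exactly the first time that pattern appears as a substring. Everything else is bookkeeping.
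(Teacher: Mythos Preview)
The paper does not supply a proof for this lemma; it is stated without justification and functions more as a definitional setup than as a result to be demonstrated. Your proposal is correct and considerably more careful: you actually construct the underlying stochastic process (the sliding window $S_n$), verify the Markov property from the independence of the fresh flip $X_{n+1}$, compute the nonzero entries of \P~explicitly as $\frac{1}{2}$ for each of the two compatible successors, and even address the subtlety of why making the two target patterns absorbing faithfully encodes the first-occurrence rule. What you gain is genuine rigor --- the paper simply asserts the Markov structure, whereas you establish it --- at the cost of some length for what the author evidently regards as routine. Both arrive at the same object, but yours makes explicit why that object deserves the name ``transition matrix'' in the first place, and your overlap argument (consecutive windows share two symbols, so no length-three block is skipped) is a point the paper never raises yet tacitly relies on throughout.
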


\begin{theorem}[Fundamental Theorem of Markov Chains]
	\label{theorem:markov}
	Let \P~be the transition matrix of a Markov Chain and let the row-vector $\mbox{\boldmath $x$}_0$ denote a probability vector of the state distributions initially. Then the probability of arriving in the 3-sequence $s_i$ after $n$ iterations of the game is,
    \begin{equation}
    	\mbox{\boldmath $x$}_n = \mbox{\boldmath $x$}_0\P^n
    \end{equation}
    In fact, the states may be generalized beyond 3-sequences to any number of states existing in an abstract system. It is this property that makes the theorem so useful in applications.
\end{theorem}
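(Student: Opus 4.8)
The plan is to proceed by induction on the number of iterations $n$, with the law of total probability supplying the inductive step. For the base case $n = 0$ the claim reduces to the tautology $\mbox{\boldmath $x$}_0 = \mbox{\boldmath $x$}_0\P^0 = \mbox{\boldmath $x$}_0 I$, where $I$ is the identity matrix, and the case $n = 1$ is exactly the content of the preceding lemma: the entry $\P_{i,j}$ is by definition the probability of moving from $s_i$ to $s_j$ in one step, so summing these over all possible starting states $s_i$ weighted by their initial probabilities $(\mbox{\boldmath $x$}_0)_i$ yields the probability of occupying $s_j$ after a single iteration, which is precisely the $j$th component of the row-vector--matrix product $\mbox{\boldmath $x$}_0\P$.

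For the inductive step, suppose $\mbox{\boldmath $x$}_{n-1} = \mbox{\boldmath $x$}_0\P^{n-1}$. I would condition on the state occupied at iteration $n-1$. Writing $(\mbox{\boldmath $x$}_n)_j$ for the probability of being in $s_j$ at step $n$, the law of total probability gives $(\mbox{\boldmath $x$}_n)_j = \sum_i (\mbox{\boldmath $x$}_{n-1})_i\, q_{i,j}$, where $q_{i,j}$ denotes the probability of landing in $s_j$ at step $n$ given that the chain sat in $s_i$ at step $n-1$. The crucial point is the appeal to the Markov (memorylessness) property: this conditional probability depends on the state at step $n-1$ alone and not on any earlier history, so $q_{i,j} = \P_{i,j}$. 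The resulting sum is by definition the $j$th entry of the product $\mbox{\boldmath $x$}_{n-1}\P$, whence $\mbox{\boldmath $x$}_n = \mbox{\boldmath $x$}_{n-1}\P = (\mbox{\boldmath $x$}_0\P^{n-1})\P = \mbox{\boldmath $x$}_0\P^n$, the last equality being associativity of matrix multiplication.

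Two pieces of bookkeeping deserve a remark as the induction runs. First, each $\mbox{\boldmath $x$}_n$ should be confirmed to be a genuine probability vector --- nonnegative entries summing to one --- which holds because $\P$ is row-stochastic and the product of a probability vector with a row-stochastic matrix is again a probability vector. Second, one should fix once and for all a consistent indexing of the rows and columns of $\P$ by the states (the eight $3$-sequences in the game, or an arbitrary finite state set in the generalization). Neither point is hard. In truth there is no real obstacle in this theorem; the only step that merits care is making explicit that it is the Markov property that licenses the replacement of the history-dependent conditional probability by the single-step entry $\P_{i,j}$, after which the result is just the definition of matrix multiplication together with an induction.
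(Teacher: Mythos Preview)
Your induction argument is correct and is the standard textbook proof of this fact: the base case is immediate, and the inductive step is exactly the law of total probability combined with the Markov property, which together reduce the recursion $\mbox{\boldmath $x$}_n = \mbox{\boldmath $x$}_{n-1}\P$ to matrix multiplication. The bookkeeping remarks about stochasticity and indexing are fine and do not affect the core argument.

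The paper, however, does not supply a proof of this theorem at all; it is stated as a known result and used as a tool in the subsequent lemma. So there is nothing to compare against: you have provided a complete argument where the paper provides none. If anything, your write-up is more thorough than what the paper would need, since the result is being invoked rather than established.
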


\begin{lemma}
	\label{lemma:markov_penney}
	We can now demonstrate how Markov Chains can be leveraged to address two further cases in Penney's game. In particular, $\set{HHT}$ is optimal for both $\set{HTH}$ and $\set{HTT}$.
\end{lemma}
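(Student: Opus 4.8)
The plan is to realize each of the two sub-claims as an absorbing Markov chain and to read off the relevant absorption probability, precisely as \Cref{theorem:markov} invites. First I would carry out the reduction already used in the main text: since $\set{HHT}$ and each of $\set{HTH}$, $\set{HTT}$ begin with $H$, I may condition on the first coin flip being $H$ and start the chain in the state $\set{H}$, all the $T$'s preceding the first $H$ being collapsed into a single do-nothing reset state.

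Next I would fix the state space. For a fixed pair of target 3-sequences, a transient state records the longest suffix of the flips observed so far that is simultaneously a prefix of one of the two targets; to these I adjoin two absorbing states, one reached when $\set{HHT}$ is completed and one reached when the other target sequence is completed. For both pairs the transient states are drawn from the reset state together with $\set{H}$, $\set{HH}$, and $\set{HT}$. I would then populate the one-step transition matrix $\P$ by splitting on the next flip. Most entries are immediate; the delicate ones are the overlap-driven transitions, and pinning these down correctly is the main obstacle, since this is exactly where Penney-type arguments are prone to error. Two representative cases: from $\set{HH}$ a flip of $H$ loops back to $\set{HH}$, so once the chain is in $\set{HH}$ the sequence $\set{HHT}$ is certain to be completed on the first subsequent $T$ and thus wins with probability $1$ from that state; and, in the pair $(\set{HHT},\set{HTT})$, a flip of $H$ out of $\set{HT}$ produces $\set{HTH}$, whose longest useful suffix is $\set{H}$, so the chain returns to $\set{H}$ rather than being absorbed. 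I would sanity-check each such entry against a short explicit flip sequence.

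Finally, with $\P$ assembled I would split off the substochastic block $Q$ on the transient states together with the block $R$ of one-step absorption probabilities, and compute the absorption probabilities either via the fundamental matrix $N = (I - Q)^{-1}$ and the product $N R$, or --- more quickly --- by first-step analysis. Writing $p_\sigma$ for the probability that $\set{HHT}$ is completed before the opposing sequence, starting from transient state $\sigma$, one obtains $p_{HH} = \frac{1}{2} p_{HH} + \frac{1}{2}$, hence $p_{HH} = 1$; then $p_{HT} = \frac{1}{2} p_H$; and finally $p_H = \frac{1}{2} p_{HH} + \frac{1}{2} p_{HT} = \frac{1}{2} + \frac{1}{4} p_H$, hence $p_H = \frac{2}{3}$. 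The identical linear system governs the pair $(\set{HHT}, \set{HTT})$, so that
\begin{equation}
	\Pr{\set{HHT} \big| \set{HTH}} = \Pr{\set{HHT} \big| \set{HTT}} = \frac{2}{3} > \frac{1}{2},
\end{equation}
which is exactly the content of \Cref{lemma:markov_penney}. I would close by remarking that the coincidence of the two probabilities at $\frac{2}{3}$ is a by-product of the structural symmetry between the two chains.
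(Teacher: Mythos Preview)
Your argument is correct. You build a compact prefix-tracking chain on the states $\set{\text{reset}, H, HH, HT}$, justify the overlap transitions carefully (in particular $HH \to HH$ on an $H$ and, in the second pair, $HT \to H$ on an $H$), and solve the resulting first-step equations by hand to obtain $p_H = \tfrac{2}{3}$ in both cases. The conditioning on the first $H$ is legitimate because neither target can be completed before an $H$ appears, so $p_{\text{reset}} = p_H$, which is also what makes the two linear systems literally coincide.

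This is a genuinely different route from the paper's. The paper works on the full $8$-state chain whose states are the eight $3$-sequences (i.e.\ the last three flips), writes down the $8\times 8$ transition matrix with $\set{HHT}$ and $\set{HTH}$ (respectively $\set{HTT}$) made absorbing, computes $\lim_{n\to\infty}\P^n$ numerically, and then averages against the uniform initial distribution $\tfrac{1}{8}(1,\ldots,1)$ to read off the absorption probability $\tfrac{2}{3}$. Your prefix chain is smaller, is solved exactly by hand, and exposes structure the paper's computation hides --- notably that once $\set{HH}$ is reached $\set{HHT}$ wins with probability $1$, and that the two pairs share the same linear system, which explains the coincidence of the two $\tfrac{2}{3}$'s rather than merely observing it. The paper's approach, by contrast, is more uniform: the same $8\times 8$ template handles every pair of strategies mechanically, at the cost of relying on a computer and not separating the two absorbing states' contributions as transparently.
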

\begin{proof}
    In the first case, we consider $\set{HTH}$. Then \Cref{theorem:optimal} predicts that \opp should choose the strategy $\set{HHT}$. In this case, we wish to understand $\Pr{\set{HHT} \big| \set{HTH}}$. Notice that the transition matrix, \P, for this game may be written,
    \begin{equation}
       	\left(\begin{array}{ccccccccc}
        	                        & \set{\scriptstyle{HHH}} & \set{\scriptstyle{HHT}} & \set{\scriptstyle{HTH}} & \set{\scriptstyle{HTT}} & \set{\scriptstyle{THH}} & \set{\scriptstyle{THT}} & \set{\scriptstyle{TTH}} & \set{\scriptstyle{TTT}} \\
			\set{\scriptstyle{HHH}} & \frac{1}{2} & \frac{1}{2} & 0 & 0 & 0 & 0 & 0 & 0 \\
			\set{\scriptstyle{HHT}} & 0 & 1 & 0 & 0 & 0 & 0 & 0 & 0 \\
			\set{\scriptstyle{HTH}} & 0 & 0 & 1 & 0 & 0 & 0 & 0 & 0 \\
			\set{\scriptstyle{HTT}} & 0 & 0 & 0 & 0 & 0 & 0 & \frac{1}{2} & \frac{1}{2} \\
			\set{\scriptstyle{THH}} & \frac{1}{2} & \frac{1}{2} & 0 & 0 & 0 & 0 & 0 & 0 \\
            \set{\scriptstyle{THT}} & 0 & 0 & \frac{1}{2} & \frac{1}{2} & 0 & 0 & 0 & 0 \\
            \set{\scriptstyle{TTH}} & 0 & 0 & 0 & 0 & \frac{1}{2} & \frac{1}{2} & 0 & 0 \\
            \set{\scriptstyle{TTT}} & 0 & 0 & 0 & 0 & 0 & 0 & \frac{1}{2} & \frac{1}{2}
        \end{array}\right)
    \end{equation}
    Then it may be demonstrated computationally that we have asymptotically the following behavior of $\lim_{n\to\infty} \P^n$:
    \begin{equation}
    	\left(\begin{array}{ccccccccc}
        	                        & \set{\scriptstyle{HHH}} & \set{\scriptstyle{HHT}} & \set{\scriptstyle{HTH}} & \set{\scriptstyle{HTT}} & \set{\scriptstyle{THH}} & \set{\scriptstyle{THT}} & \set{\scriptstyle{TTH}} & \set{\scriptstyle{TTT}} \\
			\set{\scriptstyle{HHH}} & 0 & 1 & 0 & 0 & 0 & 0 & 0 & 0 \\
			\set{\scriptstyle{HHT}} & 0 & 1 & 0 & 0 & 0 & 0 & 0 & 0 \\
			\set{\scriptstyle{HTH}} & 0 & 0 & 1 & 0 & 0 & 0 & 0 & 0 \\
			\set{\scriptstyle{HTT}} & 0 & \frac{2}{3} & \frac{1}{3} & 0 & 0 & 0 & 0 & 0 \\
			\set{\scriptstyle{THH}} & 0 & 1 & 0 & 0 & 0 & 0 & 0 & 0 \\
            \set{\scriptstyle{THT}} & 0 & \frac{1}{3} & \frac{2}{3} & 0 & 0 & 0 & 0 & 0 \\
            \set{\scriptstyle{TTH}} & 0 & \frac{2}{3} & \frac{1}{3} & 0 & 0 & 0 & 0 & 0 \\
            \set{\scriptstyle{TTT}} & 0 & \frac{2}{3} & \frac{1}{3} & 0 & 0 & 0 & 0 & 0
        \end{array}\right)
    \end{equation}
    Let us denote $\mbox{\boldmath $M$} = \lim_{n\to\infty} \P^n$. Notice then that each of initial states are equally likely, each occurring with probability $\frac{1}{8}$. Therefore, \Cref{theorem:markov} provides,
    \begin{equation}
    	\left\{\frac{1}{8}, \frac{1}{8}, \frac{1}{8}, \frac{1}{8}, \frac{1}{8}, \frac{1}{8}, \frac{1}{8}, \frac{1}{8}\right\} \mbox{\boldmath $M$} = \left\{0, \frac{2}{3}, \frac{1}{3}, 0, 0, 0, 0, 0\right\}
    \end{equation}
    This shows that $\Pr{\set{HHT} \big| \set{HTH}} = \frac{2}{3}$ and that therefore, \oppp strategy is optimal for \playp under these circumstances.
    
    To demonstrate that $\set{HHT}$ is optimal for $\set{HTT}$, we proceed simply in the same manner. We do not reproduce here the calculations as we did in the previous case, but they are easy to check using a programming language such as MATLAB or Python. Let \Q~be the transition matrix for the Markov Chain corresponding to the game where $\set{HTT}$ is \playp strategy and $\set{HTH}$ is \oppp sequence. Then we have that,
    \begin{equation}
    \left\{0,\frac{2}{3},0\frac{1}{3},0,0,0,0\right\} = \left\{\frac{1}{8}, \frac{1}{8}, \frac{1}{8}, \frac{1}{8}, \frac{1}{8}, \frac{1}{8}, \frac{1}{8}, \frac{1}{8}\right\}\left(\lim_{n\to\infty} \Q^n\right)
    \end{equation}
    It follows from here that,
    \begin{equation}
    	\Pr{\set{HTH} \big| \set{HTT}} = \frac{2}{3}
    \end{equation}
    In turn, this provides the desired result. \qed
\end{proof}

As it happens, it is possible to demonstrate for each of the eight strategies in Penney's game that the corresponding optimal strategy \emph{is} optimal using a Markov Chain argument. However, in order to demonstrate a broader range of mathematics, we do not take that approach here.

\begin{figure}
	\centering
	\begin{tikzpicture}
	    \node (HHT) at (-1, 0) {$\set{HHT}$};
	    \node (HTT) at ( 1, 1) {$\set{HTT}$};
	    \node (TTH) at ( 3, 0) {$\set{TTH}$};
	    \node (THH) at ( 1,-1) {$\set{THH}$};
	    \node (HHH) at (-1,-1) {$\set{HHH}$};
	    \node (HTH) at (-1, 1) {$\set{HTH}$};
	    \node (TTT) at ( 3, 1) {$\set{TTT}$};
	    \node (THT) at ( 3,-1) {$\set{THT}$};
    
	    \begin{scope}[every path/.style={->}]
	    	\draw (HTT) -- (HHT); 
	       	\draw (HHT) -- (THH);
	        \draw (THH) -- (TTH);
	        \draw (TTH) -- (HTT);
        
    	    \draw (TTT) -- (HTT);
    	    \draw (HTH) -- (HHT);
    	    \draw (HHH) -- (THH);
    	    \draw (THT) -- (TTH);
      	
   		\end{scope}  
	\end{tikzpicture}
    
	\caption{A graphical representation showing the relationship between the first player's chosen strategy and the strategy elected by the second player. An arrow $\left(\rightarrow\right)$ indicates that, given that the first player selected the source sequence, the second player should optimally play for the destination sequence.}
    \label{fig:optimal}
\end{figure}
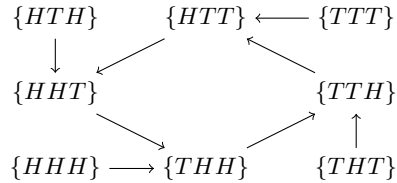

\begin{proof}[Part Two of \Cref{theorem:optimal}]
	We have already demonstrated proofs that \Cref{theorem:optimal} yields an optimal strategy for the cases where \play chooses either of the 3-sequences $\set{HHH}$ or $\set{HHT}$. By \Cref{lemma:markov_penney}, we have immediately that \Cref{theorem:optimal} gives the correct strategy for the cases where \play elects either of the strategies $\set{HTH}$ or $\set{HTT}$. 
    
    The remainder of the proof is not much work. In order to avoid the tedious repetition inherent in enumerating the remaining four cases, one can get away with simply observing that we could have arbitrarily relabeled the sides of our coin so that our ``heads'' side becomes our $T$ and the ``tails'' face represents $H$. Therefore, we may argue the last four cases via symmetry as follows.
    \begin{description}
    	\item[$\set{TTT}$:] By symmetry and the result in \Cref{lemma:hhh_response}, we have that $\set{HTT}$ is optimal for $\set{TTT}$.
        \item[$\set{TTH}$:] By the first part of the proof for \Cref{theorem:optimal}, $\set{HTT}$ is an optimal strategy.
        \item[$\set{THT}$:] By \Cref{lemma:markov_penney} we have by symmetry that $\set{TTH}$ is optimal for $\set{THT}$.
        \item[$\set{THH}$:] By the same logic as in the prior point, $\set{TTH}$ is also optimal for the strategy $\set{THH}$.
    \end{description}
    This completes the proof that \Cref{theorem:optimal} does indeed produce an optimal strategy for \opp given any of the eight strategies that \play can elect. \Cref{fig:optimal} provides a visual reference for understanding the relationships between the strategies in the the Penney ante game. \qed
\end{proof}

\section{A Markov Chain Analysis of the Penney Ante Game}

\begin{definition}[Absorbing Chains]
	Let \P~be the transition matrix of a Markov Chain. We say that a state $s_i$ is absorbing if $\P_{i,i} = 1$ such that the state may never be left. States that are not absorbing are called transient. A Markov Chain is itself absorbing if it contains at least one such state and if it is possible to transition to one of those states from any of the other states. 
\end{definition}

\begin{definition}[Markov Canonical Form]
	Let \P~be the transition matrix for an absorbing Markov Chain. Then it is possible to ``shuffle around'' the states of \P~such that \P~has the form,
    \begin{equation}
    	\P = \left(\begin{array}{ccc}
        		& \text{Transitive} & \text{Absorbing} \\
                  \text{Transitive} & \Q & \R \\
                  \text{Absorbing}  & \mbox{\boldmath $0$} & \mbox{\boldmath $I$}
        	\end{array}\right)
    \end{equation}
    Let the matrix have $n_{\text{absorbing}}$ absorbing states and $n_{\text{transient}}$ transient states. Then \Q~is a $n_{\text{transient}}\times n_{\text{transient}}$ matrix, and \R~is a $n_{\text{transient}}\times n_{\text{absorbing}}$ matrix. As usual, $\mbox{\boldmath $I$}$ indicates a $n_{\text{absorbing}}\times n_{\text{absorbing}}$ identity matrix and $\mbox{\boldmath $0$}$ is a $n_{\text{absorbing}}\times n_{\text{transient}}$ matrix of zeros. Our analysis relies primarily on the matrix \Q.
\end{definition}

\begin{theorem}[The Fundamental Matrix]
	\label{theorem:fundamental}
	Let \P~be the transition matrix for a Markov Chain in canonical form. Then there exists a matrix \N called the fundamental matrix which is,
    \begin{equation}
    	\N = \arg{\mbox{\boldmath $I$} - \Q}^{-1} = \mbox{\boldmath $I$} + \sum_{i=1}^\infty \Q^i
    \end{equation}
    Furthermore, the entry $\N_{i,j}$ is the expected number of times that a Markov process with transition matrix \P~visits the transient state $s_j$ conditioned on the fact that it began in transient state $s_i$.
    
    For a proof of this theorem, refer to \cite{probability}.
\end{theorem}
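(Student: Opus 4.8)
The plan is to establish \Cref{theorem:fundamental} in two stages: first the matrix identity $\N = \arg{\mbox{\boldmath $I$} - \Q}^{-1} = \mbox{\boldmath $I$} + \sum_{i=1}^\infty \Q^i$, and then the probabilistic reading of the individual entries $\N_{i,j}$.

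For the first stage, I would begin by showing that $\Q^n \to \mbox{\boldmath $0$}$ entrywise as $n \to \infty$. Because the chain is absorbing, from every transient state there is a directed path of positive probability leading to some absorbing state; if $m$ is chosen large enough to accommodate the longest such path and $\epsilon > 0$ is a lower bound on the probabilities of these paths, then from any transient start the probability of remaining among the transient states throughout $m$ consecutive steps is at most $1 - \epsilon < 1$. Iterating this bound, the probability of still being transient after $km$ steps is at most $\arg{1 - \epsilon}^k$, which tends to $0$, so every row sum of $\Q^{km}$ tends to $0$; since the entries are nonnegative, every entry of $\Q^n$ does as well. In particular the spectral radius of $\Q$ is strictly below $1$, so $\mbox{\boldmath $I$} - \Q$ is invertible. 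Finally, from the telescoping identity $\arg{\mbox{\boldmath $I$} - \Q}\arg{\mbox{\boldmath $I$} + \Q + \cdots + \Q^n} = \mbox{\boldmath $I$} - \Q^{n+1}$, letting $n \to \infty$ and using $\Q^{n+1} \to \mbox{\boldmath $0$}$ yields $\arg{\mbox{\boldmath $I$} - \Q}^{-1} = \sum_{i=0}^\infty \Q^i = \mbox{\boldmath $I$} + \sum_{i=1}^\infty \Q^i = \N$.

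For the second stage, fix transient states $s_i$ and $s_j$, let $X_k$ denote the state of the chain at step $k$, and introduce the indicator variable $Y_k$ which equals $1$ when $X_k = s_j$ and $0$ otherwise. Since $s_j$ can only be occupied before absorption has taken place, the restriction of the $k$-step transition law to the transient block is precisely $\Q^k$ (with $\Q^0 = \mbox{\boldmath $I$}$ accounting for the time-zero visit), so the conditional expectation satisfies $E[Y_k \mid X_0 = s_i] = \arg{\Q^k}_{i,j}$. The total number of visits to $s_j$ when the process starts at $s_i$ is $\sum_{k=0}^\infty Y_k$; because the summands are nonnegative, the monotone convergence theorem permits interchanging expectation and summation, whence $E\bigl[\sum_{k=0}^\infty Y_k \bigm| X_0 = s_i\bigr] = \sum_{k=0}^\infty \arg{\Q^k}_{i,j} = \N_{i,j}$, which is exactly the asserted interpretation.

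The main obstacle is the convergence claim $\Q^n \to \mbox{\boldmath $0$}$, which must be extracted from the bare definition of an absorbing chain rather than invoked as a spectral black box; the geometric-decay argument for the ``still transient after $n$ steps'' probability is the crux of the whole proof. Everything afterward is routine bookkeeping: the telescoping identity for the Neumann series is purely formal once invertibility is in hand, and the probabilistic interpretation is a single application of monotone convergence to a nonnegative series.
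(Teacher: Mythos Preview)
Your argument is correct in both stages: the geometric-decay bound on the ``still transient after $km$ steps'' probability cleanly forces $\Q^n \to \mbox{\boldmath $0$}$, the telescoping identity then yields the Neumann series for $\arg{\mbox{\boldmath $I$} - \Q}^{-1}$, and the indicator-variable computation together with monotone convergence gives the expected-visits interpretation of $\N_{i,j}$.

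As for comparison with the paper: the paper does not actually prove \Cref{theorem:fundamental} at all --- it simply states the result and directs the reader to Grinstead and Snell \cite{probability}. Your write-up is essentially the standard textbook argument one finds there, so you have supplied what the paper chose to outsource.
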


\subsection{How Many Flips on Average?}

\begin{theorem}[Expected Time to Victory]
	\label{theorem:expectation}
	Leveraging the idea that Penney's game may be viewed in the light of Markov Chains, we are to compute the expected number of rounds that will be played before a winner is determined in the case of a single trick. Let $\mbox{\boldmath $e$}$ be a vector whose $i^{\text{th}}$ entry is the expected number of iterations of the Markov process beginning in state $s_i$ undergoes before it enters an absorbing state. Then we may write that,
    \begin{equation}
    	\mbox{\boldmath $e$} = \N \left\{1,1,\ldots, 1,1\right\}^T
    \end{equation}
\end{theorem}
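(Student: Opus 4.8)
The plan is to reduce the claim to a linear system that is already inverted by the fundamental matrix of \Cref{theorem:fundamental}, using a standard first-step (conditioning) argument. First I would fix notation: let $\left(X_n\right)_{n \ge 0}$ be the Markov process with transition matrix \P, and for each transient state $s_i$ let $T_i$ be the number of iterations until $\left(X_n\right)$ first enters an absorbing state when started from $s_i$, so that by definition $\mbox{\boldmath $e$}_i = \mathrm{E}[T_i]$. Conditioning on the first transition, the chain moves from $s_i$ to some state $s_k$ with probability $\P_{i,k}$; if $s_k$ is absorbing the process has taken exactly one step and halts, while if $s_k$ is transient the process has taken one step and, by the Markov property, thereafter behaves like an independent copy of the chain started from $s_k$. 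Since the rows of \P~sum to one, linearity of expectation then yields the first-step relation
\begin{equation}
	\mbox{\boldmath $e$}_i = 1 + \sum_{s_k \text{ transient}} \Q_{i,k}\, \mbox{\boldmath $e$}_k,
\end{equation}
which in matrix form reads $\mbox{\boldmath $e$} = \mbox{\boldmath $1$} + \Q\,\mbox{\boldmath $e$}$, where $\mbox{\boldmath $1$} = \left\{1,1,\ldots,1,1\right\}^T$.

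The second step is purely algebraic. Rearranging gives $\left(\mbox{\boldmath $I$} - \Q\right)\mbox{\boldmath $e$} = \mbox{\boldmath $1$}$, and since \Cref{theorem:fundamental} already guarantees that $\mbox{\boldmath $I$} - \Q$ is invertible with inverse $\N$, left-multiplying by $\N$ produces $\mbox{\boldmath $e$} = \N\,\mbox{\boldmath $1$}$, which is exactly the asserted formula.

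For intuition I would also sketch a direct argument that bypasses the recursion. Writing $T_i$ as the number of indices $n \ge 0$ at which the chain started from $s_i$ still occupies a transient state, and noting that a chain restricted to transient states evolves according to $\Q$ (once the absorbing set is entered it is never left), one gets
\begin{equation}
	\mathrm{E}[T_i] = \sum_{n \ge 0} \sum_{s_j \text{ transient}} \Pr{X_n = s_j \mid X_0 = s_i} = \sum_{s_j \text{ transient}} \N_{i,j},
\end{equation}
because the inner double sum is $\sum_{n \ge 0}\left(\Q^n\right)_{i,j}$, which equals $\N_{i,j}$ by the series expansion $\N = \mbox{\boldmath $I$} + \sum_{n \ge 1}\Q^n$ of \Cref{theorem:fundamental} (equivalently, by the visit-count interpretation of $\N_{i,j}$). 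Summing over $j$ is precisely forming the product $\N\,\mbox{\boldmath $1$}$, so again $\mbox{\boldmath $e$} = \N\,\mbox{\boldmath $1$}$.

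The step I expect to be the main obstacle is not the linear algebra but the justification that $\mathrm{E}[T_i] < \infty$, which is what licenses both the use of linearity of expectation in the recursion and the Fubini-type interchange of the two sums in the direct argument. This finiteness follows from the absorbing hypothesis: from every transient state there is a path of positive probability into the absorbing set, so there exist an integer $N_0$ and $\varepsilon > 0$ with $\Pr{T_i > N_0} \le 1 - \varepsilon$ uniformly in $i$, whence $\Pr{T_i > k N_0} \le \left(1 - \varepsilon\right)^k$ and $T_i$ has geometric tails. Once this tail bound is in hand the expectations are finite, the interchanges are valid, and the rest of the proof is the routine manipulation already encapsulated in \Cref{theorem:fundamental}.
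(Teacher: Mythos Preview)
Your proposal is correct. The paper's own proof is precisely your second, ``direct'' argument: it invokes the visit-count interpretation of $\N_{i,j}$ from \Cref{theorem:fundamental} and observes that summing the $i$th row of $\N$ gives the expected total number of steps spent in transient states before absorption, which is $\mbox{\boldmath $e$}_i$; the product $\N\left\{1,\ldots,1\right\}^T$ is just that row sum written as a matrix multiplication. Your primary route---the first-step conditioning that yields $\left(\mbox{\boldmath $I$}-\Q\right)\mbox{\boldmath $e$}=\left\{1,\ldots,1\right\}^T$ and then inverts---is a genuinely different derivation: it appeals to \Cref{theorem:fundamental} only for the invertibility of $\mbox{\boldmath $I$}-\Q$ rather than for the probabilistic meaning of the entries of $\N$, and it would work even if one had proved \Cref{theorem:fundamental} purely algebraically. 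The paper says nothing about finiteness of the expectations, so your geometric-tails paragraph supplies rigor that the original proof leaves implicit.
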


\begin{proof}
	By \Cref{theorem:fundamental} we have that \N~represents the expected number of times that a process beginning in any of the transient states visits each of the other transient states. Thus, taking the summation of the rows of \N~yields the expected number of times that the process beginning in state $s_i$ is in any of the other transient states. This is the expected number of iterations before that process is absorbed. Notice that $\N \left\{1,1,\ldots, 1,1\right\}^T$ is precisely the summation of the rows expressed as a matrix multiplication. \qed
\end{proof}

\vspace{-5mm}

\begin{table}
\begin{center}
	\begin{tabular}{c|cccccccc}
		\hline
		\multicolumn{9}{c}{~~~~~~~~~~~~~~~~~~~~~~~~Initial 3-Sequences} \\
		\hline 
        & $\set{\scriptstyle{HHH}}$ & $\set{\scriptstyle{HHT}}$ & $\set{\scriptstyle{HTH}}$ & $\set{\scriptstyle{HTT}}$ & $\set{\scriptstyle{THH}}$ & $\set{\scriptstyle{THT}}$ & $\set{\scriptstyle{TTH}}$ & $\set{\scriptstyle{TTT}}$\\
        $\arg{\set{\scriptstyle{HHH}}, \set{\scriptstyle{THH}}}$ & 0 & 6 & 4 & 6 & 0 & 6 & 4 & 6 \\
        $\arg{\set{\scriptstyle{HHT}}, \set{\scriptstyle{THH}}}$ & 2 & 0 & 4 & 6 & 0 & 6 & 4 & 6 \\
        $\arg{\set{\scriptstyle{HTH}}, \set{\scriptstyle{HHT}}}$ & 2 & 0 & 0 & 6 & 2 & 4 & 4 & 6 \\
        $\arg{\set{\scriptstyle{HTT}}, \set{\scriptstyle{HHT}}}$ & 2 & 0 & $3\frac{1}{3}$ & 0 & 2 & $2\frac{2}{3}$ & $3\frac{1}{3}$ & $5\frac{1}{3}$ \\
        $\arg{\set{\scriptstyle{THH}}, \set{\scriptstyle{TTH}}}$ & $5\frac{1}{3}$ & $3\frac{1}{3}$ & $2\frac{2}{3}$ & 2 & 0 & $3\frac{1}{3}$ & 0 & 2 \\
        $\arg{\set{\scriptstyle{THT}}, \set{\scriptstyle{TTH}}}$ & 6 & 4 & 4 & 2 & 6 & 0 & 0 & 2 \\
        $\arg{\set{\scriptstyle{TTH}}, \set{\scriptstyle{HTT}}}$ & 6 & 4 & 6 & 0 & 6 & 4 & 0 & 2 \\
        $\arg{\set{\scriptstyle{TTT}}, \set{\scriptstyle{HTT}}}$ & 6 & 4 & 6 & 0 & 6 & 4 & 6 & 0 \\
		\hline
	\end{tabular}
    \vspace{2mm}
    \caption{We present here the expected time until absorption for each of the eight possible 3-sequence strategies that \play can elect and assuming that \opp will play optimally according to \Cref{theorem:optimal}. We denote by the notation $\arg{s_1,s_2}$ for strategies $s_1$ and $s_2$ that \play elected $s_1$ and that the optimal response is $s_2$. Notice that it it clear that if a process begins in either of $s_1$ or $s_2$ then the game is ended immediately and the expected time until absorption is zero.} 
    
    \label{tab:expected_absorption} 
\end{center}
\end{table}

\vspace{-1cm}

We have now established the mathematical foundation that will allow us to present our main result. We calculate the expected number of flips of a coin that will occur before a winner is declared in Penney's game. We assume that given the first player's sequence the second player will choose their strategy according to \Cref{theorem:optimal}. This is, of course, dependent on knowing which of the eight possible initial circumstances ended up occurring. We exhibit these results in \Cref{tab:expected_absorption}. We wish to draw attention to interesting symmetry that exists in the tabulation of the results that results from the symmetric nature of the game.

\begin{theorem}[Expected Length of Penney's Game]
	\label{theorem:expected_length}
	For an instance of Penney's game in which \play assumes the strategy $s_1$ and \opp takes the optimal strategy as in \Cref{theorem:optimal}, then the expected number of coin flips before a winner is declared is,
    \begin{equation}
    	\frac{\arg{0 + 3}}{8} + \frac{\arg{0 + 3}}{8} + \frac{1}{8}\sum_{i=1}^6 \arg{\mbox{\boldmath $e$}^{\arg{s_1,s_2}}_i + 3}
    \end{equation}
\end{theorem}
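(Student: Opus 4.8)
The plan is to reduce the statement to the law of total expectation, conditioning on the outcome of the first three coin flips, and then to invoke \Cref{theorem:expectation} to evaluate each of the resulting conditional expectations. First I would fix the first player's sequence $s_1$ together with the second player's optimal response $s_2$ from \Cref{theorem:optimal}, and let $F$ denote the total number of coin flips until a winner is declared. The first three flips form the initial length-three window, and because the coin is fair each of the eight possible 3-sequences is realized as that window with probability exactly $\frac{1}{8}$; hence $\mathrm{E}[F] = \frac{1}{8}\sum_{i=1}^{8}\mathrm{E}[F \mid \text{the first window is } s_i]$, where the sum ranges over the eight sequences.

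Next I would establish that, once the initial window is laid down, every additional coin flip corresponds to exactly one step of the Penney-game Markov chain: appending one symbol slides the length-three window forward by one position, and the trick ends precisely when the window first equals $s_1$ or $s_2$, i.e. precisely when the chain enters an absorbing state. Therefore, conditioned on the first window being $s_i$, we have $F = 3 + (\text{number of chain steps until absorption})$. If $s_i$ is one of the two winning sequences $s_1, s_2$, the chain starts out already absorbed and this conditional expectation is just $3$; otherwise $s_i$ is one of the six transient states and, by \Cref{theorem:expectation} applied to the fundamental matrix of \Cref{theorem:fundamental}, the expected number of further steps is the corresponding entry of $\mbox{\boldmath $e$}^{\arg{s_1,s_2}} = \N\{1,1,\ldots,1,1\}^T$, so the conditional expectation is $\mbox{\boldmath $e$}^{\arg{s_1,s_2}}_i + 3$.

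Finally I would assemble the pieces: splitting the eight equally weighted terms into the two absorbing starting states, each contributing $\frac{1}{8}\arg{0+3}$, and the six transient starting states, contributing $\frac{1}{8}\sum_{i=1}^{6}\arg{\mbox{\boldmath $e$}^{\arg{s_1,s_2}}_i + 3}$, yields exactly the displayed formula; a one-line simplification shows it equals $3 + \frac{1}{8}\sum_{i=1}^{6}\mbox{\boldmath $e$}^{\arg{s_1,s_2}}_i$, which is the transparent form of the claim and is consistent with the entries tabulated in \Cref{tab:expected_absorption}. The step I expect to be the main obstacle is the bookkeeping of the middle paragraph: one must carefully justify the exact ``one flip equals one chain step'' correspondence, and match the two zero entries of the tabulated vector $\mbox{\boldmath $e$}^{\arg{s_1,s_2}}$ with the two absorbing starting states $s_1$ and $s_2$, so that the index in the final sum genuinely runs over only the six transient states. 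Once that identification is pinned down, the remainder is routine arithmetic.
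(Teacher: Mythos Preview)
Your proposal is correct and follows essentially the same line as the paper's own proof: condition on the equally likely first 3-sequence, add the three initial flips to the expected time to absorption from that state, and split off the two absorbing initial states (contributing $0+3$) from the six transient ones (contributing $\mbox{\boldmath $e$}^{\arg{s_1,s_2}}_i + 3$). Your write-up is in fact a bit more explicit than the paper's, since you name the law of total expectation and spell out the ``one flip equals one chain step'' correspondence, but the underlying argument is the same.
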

\begin{proof}
	Consider that each of the initial states are equally probable, each occurring with frequency equal to $\frac{1}{8}$. Then the average time to absorption for a process beginning in either of $s_1$ or $s_2$ is clearly zero. Additionally, the vector \mbox{\boldmath $e$} presents the average time to absorption for each of the six ``non-immediate-victory'' states. Therefore, if we do not condition on any particular initial sequence at the beginning of the game, we may simply average together all of the expected times to absorption.
    
    However, it is important to note that in this case averaging these expected times to absorption will not yield the expected number of coins flipped before a winner is declared. This is because we are assumed to start with a given 3-sequence in each of these cases, yet it takes three coin flips to arrive there in the first place! Hence, we introduce a supplement of three to each of our expected times to absorption to obtain the expected number of coins we must flip. \qed

\end{proof}

\vspace{-0.6cm}

\begin{table}
\begin{center}
	\begin{tabular}{|c|c|c|c|}
		\hline
		\multicolumn{4}{c}{Possible Penney Ante Games} \\
		\hline 
        $\arg{\set{\scriptstyle{HHH}}, \set{\scriptstyle{THH}}}$ &
        $\arg{\set{\scriptstyle{HHT}}, \set{\scriptstyle{THH}}}$ &
        $\arg{\set{\scriptstyle{HTH}}, \set{\scriptstyle{HHT}}}$ &
        $\arg{\set{\scriptstyle{HTT}}, \set{\scriptstyle{HHT}}}$ \\\hline
        7 & $6\frac{1}{2}$ & 6 & $5\frac{1}{3}$ \\
        \hline\hline
        $\arg{\set{\scriptstyle{THH}}, \set{\scriptstyle{TTH}}}$ &
        $\arg{\set{\scriptstyle{THT}}, \set{\scriptstyle{TTH}}}$ &
        $\arg{\set{\scriptstyle{TTH}}, \set{\scriptstyle{HTT}}}$ & 
        $\arg{\set{\scriptstyle{TTT}}, \set{\scriptstyle{HTT}}}$ \\\hline
        $5\frac{1}{3}$ & 6 & $6\frac{1}{2}$ & 7 \\
		\hline
	\end{tabular}
    \vspace{2mm}
    \caption{We present here the expected number of coin tosses that are required for each optimal variation of the Penney coin tossing game according to \Cref{theorem:expected_length}. Notice again the symmetry that results.} 
    
    \label{tab:expected_coins} 
\end{center}
\end{table}

\vspace{-1.1cm}

This represents an approach that can serve as an alternative to the method using Conway numbers proposed by Nishiyama in \cite{nishiyama}. We show the expected number of coin tosses for each of the eight possible games in \Cref{tab:expected_coins}. If we assume that each of the strategies that \play can assume are equally likely, then we have that the expected number of coins flipped in Penney's ante game is given by,
\begin{equation}
	\frac{2}{8} \arg{7 + \frac{13}{2} + 6 + \frac{16}{3}} = \frac{149}{24} \approx 6.2083
\end{equation}

\subsection{An Alternative Derivation of Optimality}

It is possible to derive the probability $\Pr{s_1 \left|\right. s_2}$ for any pair of strategies arising from Penney's game. Here, we present a method that uses the fundamental matrix and another theorem from Markov Chain theory to derive these same probability values. But first, we present a critical result.

\begin{theorem}
	Let \B~be a $n_\text{transient}\times n_{\text{absorbing}}$ matrix such that the value of $\B_{i,j}$ is the probability that a Markov process will be absorbed into the absorbing state $s_j$ given that it started in the transient state $s_i$. Then we may calculate \B~as,
    \begin{equation}
    	\B = \N\R
    \end{equation}
\end{theorem}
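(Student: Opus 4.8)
The plan is to decompose the absorption probability $\B_{i,j}$ by conditioning on the first step of the Markov process. Starting from transient state $s_i$, the process either moves directly into an absorbing state $s_j$ (contributing $\R_{i,j}$), or it moves into some other transient state $s_k$ (with probability $\Q_{i,k}$) and from there eventually gets absorbed into $s_j$ (with probability $\B_{k,j}$ by definition). Summing over all possibilities gives the recurrence
\begin{equation}
	\B_{i,j} = \R_{i,j} + \sum_{k} \Q_{i,k} \B_{k,j},
\end{equation}
which in matrix form reads $\B = \R + \Q\B$.

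**Next I would** solve this matrix equation for $\B$. Rearranging yields $\B - \Q\B = \R$, i.e. $\arg{\mbox{\boldmath $I$} - \Q}\B = \R$. By \Cref{theorem:fundamental}, the matrix $\mbox{\boldmath $I$} - \Q$ is invertible with inverse $\N = \arg{\mbox{\boldmath $I$} - \Q}^{-1}$, so left-multiplying both sides by $\N$ gives $\B = \N\R$, as claimed. The only subtlety worth spelling out is the justification that $\mbox{\boldmath $I$} - \Q$ is indeed invertible, but this is exactly the content of \Cref{theorem:fundamental}, which guarantees the Neumann series $\mbox{\boldmath $I$} + \sum_{i=1}^\infty \Q^i$ converges to $\arg{\mbox{\boldmath $I$} - \Q}^{-1}$ for an absorbing chain. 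One should also confirm that the probabilities are well-defined, i.e. that absorption happens with probability one, which again follows from the absorbing-chain structure assumed in the hypotheses.

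**The main obstacle** is not really a difficulty but a matter of care: making the first-step conditioning argument rigorous requires invoking the Markov property and a countable additivity / dominated-convergence argument to split the event ``absorbed into $s_j$'' into the disjoint union over the first transition. Concretely, one writes the event of absorption into $s_j$ (starting from $s_i$) as the disjoint union, over $n \geq 1$, of the events ``the first $n-1$ steps stay transient and step $n$ lands in $s_j$,'' and the probability of each such event factors as a product of $\Q$-entries times an $\R$-entry; resumming this series and regrouping by the \emph{first} step rather than the \emph{last} step produces the recurrence above. Since everything is nonnegative and bounded by $1$, the rearrangement is legitimate. I expect the cleanest writeup simply states the first-step recurrence, notes it holds by the Markov property, and then does the one-line linear algebra to conclude $\B = \N\R$.
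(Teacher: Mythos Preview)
Your argument is correct, but it proceeds differently from the paper's. You condition on the \emph{first} step to obtain the recurrence $\B = \R + \Q\B$ and then invert $\mbox{\boldmath $I$} - \Q$ via \Cref{theorem:fundamental}. The paper instead conditions on the \emph{last} transient state and the time of absorption: it writes
\[
\B_{i,j} \;=\; \sum_{n=0}^{\infty}\sum_{k} \Q^{n}_{i,k}\,\R_{k,j},
\]
swaps the two sums, and recognizes $\sum_{n\geq 0}\Q^{n}_{i,k}=\N_{i,k}$ from the Neumann-series form of the fundamental matrix, giving $\B_{i,j}=(\N\R)_{i,j}$ directly. Your route is the classic one-step-analysis derivation and has the advantage of yielding a clean linear equation whose solution is immediate once invertibility of $\mbox{\boldmath $I$}-\Q$ is known; the paper's route avoids setting up and solving a recurrence but leans more heavily on the series identity in \Cref{theorem:fundamental} and on justifying the interchange of summations (which you in fact sketch in your final paragraph). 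Either approach is standard and fully rigorous here.
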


\begin{proof}
	This proof is straightforward and requires only some simple manipulations, particularly switching the order of the summations.
	\begin{eqnarray}
		\B_{i,j} & = & \sum_{n=0}^\infty \sum_{k=0}^{n_{\text{transient}}} \Q^n_{i,k} \R_{k,j} = \sum_{k=0}^{n_{\text{transient}}} \sum_{n=0}^\infty \Q^n_{i,k} \R_{k,j} \\
        & = & \sum_{k=0}^{n_{\text{transient}}} \N_{i,k}\R_{k,j} = \arg{\N\R}_{i,j}
    \end{eqnarray}
\end{proof}

\begin{proposition}
	\label{prop:alternative}
	The values $\Pr{s_1 \left|\right. s_2}$ and $\Pr{s_2 \left|\right. s_1}$ may be written as follows,
    \begin{eqnarray}
    	\Pr{s_1 \left|\right. s_2} & = & \frac{1}{8} + \frac{0}{8} + \frac{1}{8}\sum_{i=1}^6 \B_{i, 1} \\
        \Pr{s_2 \left|\right. s_1} & = & \frac{0}{8} + \frac{1}{8} + \frac{1}{8}\sum_{i=1}^6 \B_{i, 2} = 1 - \Pr{s_1 \left|\right. s_2}
    \end{eqnarray}
\end{proposition}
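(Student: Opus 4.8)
The plan is to prove both identities simultaneously by the law of total probability, conditioning on which of the eight $3$-sequences is the first length-three window of coin flips to appear. Because the coin is fair, that opening $3$-sequence is uniformly distributed over the eight possibilities, so each initial state carries probability $\frac{1}{8}$ — the same accounting already used in the proof of \Cref{theorem:expected_length}. In the absorbing Markov chain attached to the pair $\arg{s_1,s_2}$, the states $s_1$ and $s_2$ are absorbing (once a player's pattern occurs the trick is over) and the remaining six states are transient; I would fix an ordering of the absorbing block so that column $1$ of $\B$ is indexed by $s_1$ and column $2$ by $s_2$.

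First I would dispose of the two ``instant'' openings. If the opening $3$-sequence is exactly $s_1$, then \play has already won the trick, so the conditional probability of the event ``$s_1$ appears before $s_2$'' equals $1$, contributing $\frac{1}{8}\cdot 1$ to $\Pr{s_1\big|s_2}$; if the opening $3$-sequence is $s_2$, then \play has already lost, contributing $\frac{1}{8}\cdot 0$. For each of the six transient opening states $s_i$, the conditional probability that \play eventually wins is, by definition, the probability that the chain is absorbed into $s_1$ when started from $s_i$, which is exactly $\B_{i,1}$ by the theorem immediately preceding this proposition; hence that state contributes $\frac{1}{8}\B_{i,1}$. Summing these nine contributions yields the stated formula for $\Pr{s_1\big|s_2}$, and running the identical argument with the two players — and columns $1$ and $2$ of $\B$ — interchanged yields the formula for $\Pr{s_2\big|s_1}$.

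To obtain $\Pr{s_2\big|s_1} = 1 - \Pr{s_1\big|s_2}$, I would use the fact that an absorbing chain reaches an absorbing state with probability $1$ from every transient state, so the two absorption probabilities out of each transient $s_i$ sum to one: $\B_{i,1} + \B_{i,2} = 1$ for $i = 1,\dots,6$ (equivalently, each row of $\B = \N\R$ sums to $1$, since $\R\{1,\dots,1\}^T = \arg{\mbox{\boldmath $I$} - \Q}\{1,\dots,1\}^T$). Therefore $\sum_{i=1}^{6}\B_{i,1} + \sum_{i=1}^{6}\B_{i,2} = 6$, and adding the two displayed formulas gives $\frac{1}{8} + \frac{1}{8} + \frac{6}{8} = 1$, as claimed.

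The argument is almost entirely bookkeeping, so the only genuine points requiring care are: (i) folding in the two openings that coincide with an absorbing state, since these are degenerate ``zero-step'' absorptions not described by the fundamental matrix, which tracks only behaviour started from transient states; and (ii) checking that the chain really is absorbing in each of the eight games — that is, that every transient state can reach $\set{s_1,s_2}$ — so that $\B$ is well defined through the fundamental matrix of \Cref{theorem:fundamental}, which follows from the structure of the pattern-matching transitions already exhibited in \Cref{lemma:markov_penney}. As a sanity check, a degenerate game such as $\arg{\set{HHH},\set{THH}}$, in which $\set{HHH}$ is unreachable from any other state, is handled correctly: there $\B_{i,1} = 0$ for every transient $i$, and the formula returns $\Pr{\set{HHH}\big|\set{THH}} = \frac{1}{8}$, in agreement with the earlier corollary.
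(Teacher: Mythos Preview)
Your argument is correct and follows essentially the same route as the paper: condition on the uniformly distributed opening $3$-sequence, handle the two absorbing openings as contributing $1$ and $0$, and use $\B_{i,1}$ for the six transient openings, then invoke the row-stochasticity of $\B$ (equivalently, complementarity of the two absorption events) to get the second identity. One small slip: you write ``nine contributions'' where there are of course eight.
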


\begin{proof}
	Consider that if the game begins in the 3-sequence $s_1$, then the probability of being absorbed into that state is one, yielding a probability of zero for the event that the process is absorbed into $s_2$. The matrix $\B_{i, 1}$ gives the probability that the process is absorbed into $s_1$ given that it starts in any of the transient states. 
    
    Therefore, given that all eight initial states are equally likely, we have that the probability of the process terminating in $s_1$ is the average of the probabilities that it terminates in $s_1$ given an initial state. This yields the equation,
    \begin{equation}
    	\Pr{s_1 \left|\right. s_2} = \frac{1}{8} + \frac{0}{8} + \frac{1}{8}\sum_{i=1}^6 \B_{i, 1}
    \end{equation}
    To see that the equation for $\Pr{s_2 \left|\right. s_1}$ must hold in \Cref{prop:alternative} we need only confirm that $s_2 \vert s_1$ is a complementary event to $s_1\vert s_2$. This is easily seen to be true and so we obtain,
    \begin{eqnarray}
    	\Pr{s_2 \left|\right. s_1} & = & 1 - \Pr{s_1 \left|\right. s_2} = 1 - \arg{\frac{1}{8} + \frac{0}{8} + \frac{1}{8}\sum_{i=1}^6 \B_{i, 1}}\\
        & = & \frac{7}{8} - \frac{1}{8}\sum_{i=1}^6 \B_{i,1} = \frac{7}{8} - \frac{1}{8} \sum_{i=1}^6 \arg{1 - \B_{i,2}}\\
        & = & \frac{7}{8} - \frac{1}{8}\sum_{i=1}^6 1 + \sum_{i=1}^6 \B_{i,2}=\frac{1}{8} + \sum_{i=1}^6 \B_{i,2}
    \end{eqnarray}
    This completes the proof. \qed
\end{proof}


\begin{thebibliography}{5}
	\bibitem{nishiyama}
    Nishiyama, Yutaka. \emph{Pattern Matching Probabilities and Paradoxes as a New Variation on Penney's Coin Game}. International Journal of Pure and Applied Mathematics. Volume 59 No. 3 2010, pages 357-366.
    \bibitem{probability}
    Charles Grinstead and Laurie Snell. \emph{Ginstead and Snell's Introduction to Probabilitiy}. Second edition. American Mathematical Society. July 4, 2006.
    \bibitem{penney}
    Walter Penney, 95 Penney-Ante, \emph{Journal of Recreational Mathematics}, 2 (1969), 241.
\end{thebibliography}
\end{document}